\newcommand{\leqnomode}{\tagsleft@true}
\newcommand{\reqnomode}{\tagsleft@false}
\newcommand\blackslug{\hbox{\hskip 1pt \vrule width 4pt height 8pt depth 1.5pt
        \hskip 1pt}}
\def\longbox#1{\parbox{0.85\textwidth}{#1}}
\newenvironment{proof}{\noindent {\bf Proof:\ }}{{\quad \blackslug \medbreak}}
\title{Subdivided Claws and the Clique-Stable Set Separation Property }
\author{Maria Chudnovsky\thanks{This material is based upon work supported in part by the U. S. Army
Research Office under grant   number W911NF-16-1-0404, and by  NSF grant DMS-1763817.}
and
Paul Seymour\thanks{Partially supported by NSF grant  DMS-1800053 and AFOSR grant A9550-19-1-0187.}\\
Princeton University, Princeton, NJ 08544}
\date{December 6, 2019; revised \today}
\newtheorem{theorem}{}[section]
\begin{document}
\maketitle

\begin{abstract}
  Let $\mathcal{C}$ be a class of graphs closed under taking induced subgraphs.
  We say that $\mathcal{C}$ has the {\em clique-stable set separation property}
 if there exists $c \in \mathbb{N}$ such that
  for every graph $G \in \mathcal{C}$ there is a
  collection $\mathcal{P}$ of partitions $(X,Y)$ of the vertex set of $G$
  with $|\mathcal{P}| \leq |V(G)|^c$ and with the following property:  
  if $K$ is a clique of $G$, and $S$ is a stable set of $G$, and
  $K \cap S =\emptyset$, then    there
  is $(X,Y) \in \mathcal{P}$ with  $K \subseteq  X$ and $S \subseteq Y$.
  In 1991 M. Yannakakis conjectured that the class of all graphs has the
  clique-stable set separation property, but this conjecture was disproved by
  M. G\"{o}\"{o}s in 2014. Therefore it is now of
  interest to understand for which classes of graphs such a constant $c$ exists. In this paper we define two infinite families $\mathcal{S}, \mathcal{K}$ of
  graphs and show that for every $S \in \mathcal{S}$ and $K \in \mathcal{K}$, the class of graphs with no induced subgraph isomorphic to $S$ or $K$
  has the clique-stable set separation property.

\end{abstract}

\section{Introduction}

All graphs in this paper are finite and simple.  Let $G$ be a graph.   
A {\em clique} in $G$ is a set of pairwise adjacent vertices, and a
{\em stable set} 
is a set of pairwise non-adjacent vertices.
 Let $\mathcal{C}$ be a class of graphs closed under taking induced subgraphs.
  We say that $\mathcal{C}$ has the {\em clique-stable set separation property}
 if there exists $c \in \mathbb{N}$ such that
  for every graph $G \in \mathcal{C}$ there is a
  collection $\mathcal{P}$ of partitions $(X,Y)$ of the vertex set of $G$
  with $|\mathcal{P}| \leq |V(G)|^c$ and with the following property:  
  if $K$ is a clique of $G$, and $S$ is a stable set of $G$, and
  $K \cap S =\emptyset$, then    there
  is $(X,Y) \in \mathcal{P}$ with  $K \subseteq  X$ and $S \subseteq Y$.
  This property plays an important role in a large variety of fields: communication complexity, combinatorial optimization,  constraint satisfaction and others
  (for a comprehensive survey of these connections see \cite{Aureliethesis}).
  
  In 1991 Mihalis Yannakakis conjectured that the class of all graphs has the
  clique-stable set separation property \cite{Yannakakis}, but this conjecture
  was disproved by
  Mika G\"{o}\"{o}s in 2014 \cite{Goos}. Therefore it is now of
  interest to understand for which classes of graphs such a constant $c$ exists; our main result falls into that category.

  Let $G$ be a graph and let $X,Y$ be  disjoint subsets of  $V(G)$.
  We denote by $G[X]$ the subgraph of $G$ induced by $X$,
  by $N(X)$ the set of all vertices of $V(G) \setminus X$ with a neighbor in
  $X$, and by $N[X]$ the set $N(X) \cup X$. 
  We say that $X$ is {\em complete} to $Y$ if every vertex of $X$ is adjacent to
  every vertex of $Y$, and that $X$ is {\em anticomplete} to $Y$ if every
  vertex of $X$ is non-adjacent to every vertex of $Y$. We say that $X$ and $Y$
  are {\em matched} if   every vertex of $X$ has exactly  one
  neighbor in $Y$, and every vertex of $Y$ has exactly one neighbor in $X$ (and therefore $|X|=|Y|$). For a graph $H$, we say that $G$ is {\em $H$-free} if no induced subgraph of $G$ is isomorphic to $H$.

 Next we  define two types of graphs. Let $p,q \in \mathbb{N}$.
  We define the graph $F_S^{p,q}$ as follows:
  \begin{itemize}
  \item $V(F_S^{p,q})=K \cup S_1 \cup S_2 \cup S_3$ where
    $K$ is a clique, $S_1, S_2,S_3$ are stable sets, and the sets
    $K,S_1,S_2,S_3$ are pairwise disjoint;

  \item $|K|=|S_1|=p$, and $K$ and $S_1$ are matched;

  \item $|S_2|=|S_3|=q$, and $S_2$ and $S_3$ are matched;

  \item $K$ is complete to $S_2$;
    
  \item there are no other edges in $F_S^{p,q}$.
  \end{itemize}
  The graph $F_K^{p,q}$ is obtained from $F_S^{p,q}$ by making all pairs of
  vertices of $S_3$ adjacent.

\begin{figure}[H]
\centering

\begin{tikzpicture}[scale=2,auto=left]
\tikzstyle{every node}=[inner sep=1.5pt, fill=black,circle,draw]  
\centering

\node (s11) at (0,2) {};
\node(s12) at (0.5,2){};
\node(s13) at (1,2){};
\node (k1) at (0,1.5) {};
\node(k2) at (0.5,1.4){};
\node(k3) at (1,1.5) {};
\node (s21) at (0,1) {};
\node(s22) at (0.5,1) {};
\node(s23) at (1,1) {};
\node (s31) at (0,0.5) {};
\node(s32) at (0.5,0.5) {};
\node(s33) at (1,0.5) {};

\foreach \from/\to in {k1/k2,k2/k3,k1/k3}
\draw [-] (\from) -- (\to);

\foreach \from/\to in {k1/s11,k2/s12,k3/s13}
\draw [-] (\from) -- (\to);

\foreach \from/\to in {k1/s21,k1/s22,k1/s23,k2/s21,k2/s22,k2/s23, k3/s21,k3/s22,k3/s23}
\draw [-] (\from) -- (\to);

\foreach \from/\to in {s21/s31,s22/s32,s23/s33}
\draw [-] (\from) -- (\to);

\node (t11) at (4,2) {};
\node(t12) at (4.5,2){};
\node(t13) at (5,2){};
\node (l1) at (4,1.5) {};
\node(l2) at (4.5,1.4){};
\node(l3) at (5,1.5) {};
\node (t21) at (4,1) {};
\node(t22) at (4.5,1) {};
\node(t23) at (5,1) {};
\node (t31) at (4,0.5) {};
\node(t32) at (4.5,0.4) {};
\node(t33) at (5,0.5) {};

\foreach \from/\to in {l1/l2,l2/l3,l1/l3}
\draw [-] (\from) -- (\to);

\foreach \from/\to in {l1/t11,l2/t12,l3/t13}
\draw [-] (\from) -- (\to);

\foreach \from/\to in {l1/t21,l1/t22,l1/t23,l2/t21,l2/t22,l2/t23, l3/t21,l3/t22,l3/t23}
\draw [-] (\from) -- (\to);

\foreach \from/\to in {t21/t31,t22/t32,t23/t33}
\draw [-] (\from) -- (\to);

\foreach \from/\to in {t31/t32,t32/t33,t31/t33}
\draw [-] (\from) -- (\to);

\end{tikzpicture}

\caption{The graphs $F_S^{3,3}$ and $F_K^{3,3}$}

\end{figure}
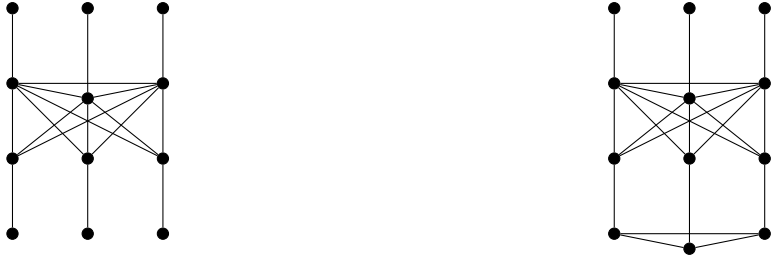
  
  Let $\mathcal{F}^{p,q}$ be the class of all graphs that are both
  $F_S^{p,q}$-free and  $F_K^{p,q}$-free.
  We can now state our main result:
  \begin{theorem}\label{main}
For all $p,q>0$ the class $\mathcal{F}^{p,q}$ has the clique-stable set separation property.
  \end{theorem}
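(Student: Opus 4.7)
The plan is to prove Theorem~\ref{main} by induction on $p+q$, with base cases $p=1$ and $q=1$. The case $q=1$ is particularly clean: since $|S_3|=1$, the graphs $F_S^{p,1}$ and $F_K^{p,1}$ coincide (a one-vertex ``$S_3$'' layer is vacuously both a clique and a stable set), so $\mathcal{F}^{p,1}$ excludes a single, structurally rigid ``matched-clique plus pendant'' graph, and CSS separation for it can be established directly by enumerating partitions indexed by constant-size vertex tuples and exploiting the forbidden pattern to find a separating tuple. The case $p=1$ is treated symmetrically, and may draw on existing CSS results for classes that exclude ``subdivided star''-like graphs.

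For the inductive step, fix $G \in \mathcal{F}^{p,q}$ with $p,q \geq 2$ and a disjoint clique-stable set pair $(K,S)$. I intend to build a polynomial family $\mathcal{P}$ of partitions of $V(G)$ from two sources: first, for every tuple $\tau=(v_1,\ldots,v_t)$ with $t \leq c(p,q)$, include every partition determined by the $2^t$ atoms of the neighborhood/non-neighborhood pattern of $\tau$; second, for every such $\tau$ and every atom $W$, include (pullbacks of) the inductive CSS families applied to $G[W]$. The combinatorial task is then to show that for any disjoint clique-stable set pair $(K,S)$ there is a tuple $\tau$ whose neighborhood-pattern partition, possibly refined by the inductive partition on an atom, separates $K$ from $S$.

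The structural heart of the argument is the joint force of forbidding both $F_S^{p,q}$ and $F_K^{p,q}$ via Ramsey's theorem: suppose $K$ has a ``private non-neighbor'' set $S_1 \subseteq S$ of size $p$ matched to $K$, and $q$ vertices $S_2 \subseteq S$ complete to $K$; then any pool of at least $R(q,q)$ vertices anticomplete to $K \cup S_1$ and supplying a perfect matching with $S_2$ would contain either a $q$-clique (yielding $F_K^{p,q}$) or a $q$-stable set (yielding $F_S^{p,q}$). Hence the pool of ``$S_3$-candidates'' has size at most $R(q,q)-1$, and a K\"onig-type matching-avoidance argument refines this into a small ``controller'' set of vertices. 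The main obstacle, and where I expect the bulk of the technical work, is converting this cardinality bound into an explicit separating tuple $\tau$; this is particularly delicate for vertices of $S$ that are complete to $K$, since such vertices cannot be separated from $K$ by any common-neighborhood partition and must instead be disentangled by combining the controller set with a carefully chosen inductive reduction to $\mathcal{F}^{p',q'}$ with $p'+q'<p+q$ on a derived induced subgraph.
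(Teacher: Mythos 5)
Your proposal correctly isolates one ingredient of a working argument, namely the Ramsey observation: because both $F_S^{p,q}$ and $F_K^{p,q}$ are excluded, a set of more than $R(q,q)$ vertices anticomplete to $K_1\cup S_1$ and matched into vertices complete to $K_1$ would yield one of the forbidden graphs, giving a bound of roughly $R(q,q)$ on the relevant ``pool''. This is exactly the paper's step showing $G$ is $F_{p,R}$-free. But the rest of the plan has a genuine gap, and you acknowledge it yourself: the ``main obstacle'' of separating from $K$ those vertices of $S$ that are complete to $K$ (or to a chosen $K_1\subseteq K$) is precisely the heart of the problem, and your proposal defers it to an unspecified ``controller set'' plus an inductive reduction. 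Moreover, the induction on $p+q$ that is supposed to absorb this difficulty has no foundation: an induced subgraph $G[W]$ of a graph in $\mathcal{F}^{p,q}$ is only guaranteed to lie in $\mathcal{F}^{p,q}$ again, not in some $\mathcal{F}^{p',q'}$ with $p'+q'<p+q$, and you give no mechanism by which the ``derived induced subgraph'' would exclude the smaller configurations. The base case $p=1$ is likewise only gestured at. As written, the proposal does not constitute a proof.

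For comparison, the paper needs no induction at all. After reducing to maximal $K$ and $S$, it picks a minimal $S_1'\subseteq S$ dominating $K$ (if $|S_1'|<p$ a neighborhood partition $(N(S_1'),V(G)\setminus N(S_1'))$ already works), extracts matched sets $S_1\subseteq S_1'$, $K_1\subseteq K$ of size $p$, lets $Z$ be the vertices anticomplete to $K_1\cup S_1$, lets $S'$ be the vertices of $S\setminus S_1$ complete to $K_1$, and chooses a minimal $S_2\subseteq S'$ with $N(S_2)\cap Z=N(S')\cap Z$; the Ramsey step forces $|S_2|<R(q,q)$. The key trick you are missing is the definition of the separating side $A_X$ for the triple $X=(K_1,S_1,S_2)$: $v\in A_X$ iff ($v\in K_1$ or $v$ is complete to $K_1$) and ($v$ has a neighbor in $S_1$ or in $Z\setminus N(S_2)$). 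Every $k\in K$ has a neighbor in $S_1'\subseteq S_1\cup(Z\cap S)$ and $Z\cap S\subseteq Z\setminus N(S_2)$ since $S$ is stable, so $K\subseteq A_X$; while any $s\in S$ complete to $K_1$ lies in $S'$, is anticomplete to $S_1$, and has all its $Z$-neighbors inside $N(S_2)\cap Z$, so $s\notin A_X$. Since the partitions are indexed by tuples of at most $2p+R-1\le 2p+2^{2q}-1$ vertices, one gets a single polynomial-size family directly, with no recursion and no base-case analysis.
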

  Since the clique-stable set separation property is preserved under taking
  complements, we immediately deduce:
   \begin{theorem}
     For all $p,q>0$ the class of graphs whose complements are in $\mathcal{F}^{p,q}$ has the
     clique-stable set separation property.
\end{theorem}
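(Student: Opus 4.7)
The final statement is an immediate corollary of Theorem~\ref{main}, via the elementary observation (already flagged by the authors) that the clique-stable set separation property is preserved under complementation. My plan has two steps: first, verify this invariance as a lemma; second, apply Theorem~\ref{main} to the complement $\bar G$ and translate the resulting family of partitions back to $G$ by swapping the two sides of each partition.

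\paragraph{Details.} For the invariance step, observe that for every graph $H$ a subset of $V(H)$ is a clique in $H$ if and only if it is a stable set in $\bar H$, and vice versa; in particular $K\cap S=\emptyset$ is symmetric in $K$ and $S$. Consequently, if $\mathcal{P}$ is a collection of partitions of $V(\bar G)$ witnessing the clique-stable set separation property for $\bar G$, then the ``flipped'' collection $\mathcal{P}'=\{(Y,X):(X,Y)\in\mathcal{P}\}$ witnesses it for $G$: given a clique $K$ and a disjoint stable set $S$ of $G$, the set $S$ is a clique and $K$ is a stable set of $\bar G$, so some $(X,Y)\in\mathcal{P}$ satisfies $S\subseteq X$ and $K\subseteq Y$; then $(Y,X)\in\mathcal{P}'$ satisfies $K\subseteq Y$ and $S\subseteq X$, as required. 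Since $|\mathcal{P}'|=|\mathcal{P}|\le|V(\bar G)|^c=|V(G)|^c$, the same exponent $c$ furnished by Theorem~\ref{main} works. Applied with $\bar G\in\mathcal{F}^{p,q}$, this proves the statement.

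\paragraph{Obstacle.} There is no substantive obstacle here; the entire content lies in Theorem~\ref{main}, and the reduction to the complement class is a routine symmetry argument whose only step is bookkeeping of which side of the partition plays which role.
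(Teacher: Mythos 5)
Your proposal is correct and is exactly the paper's route: the authors also deduce this statement immediately from Theorem~\ref{main} via the observation that the separation property is invariant under complementation, which you simply spell out (flipping each partition $(X,Y)$ to $(Y,X)$).
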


     \section{The Proof}

  In this section we prove \ref{main}. The idea of the proof comes from
  \cite{Holes}.   Let $G \in \mathcal{F}^{p,q}$.
  Define $\mathcal{P}_1$ to be the set of all partitions 
  $(N[X], V(G) \setminus N[X])$ and $(N(X), V(G) \setminus N(X))$ where $X$
  is a subset of $V(G)$ with
    $|X| < p$.
  Clearly $|\mathcal{P}_1| \leq 2|V(G)|^p$.
  
  Write $R=R(q,q)$ to mean the smallest positive integer $R$ such that every
  $2$-coloring of the edges of the complete graph on $R$ vertices contains a
  monochromatic complete graph on $q$ vertices.
  Ramsey's Theorem \cite{Ramsey} implies:

  \begin{theorem}\label{Ramsey}
    $R(q,q) \leq 2^{2q}$.
  \end{theorem}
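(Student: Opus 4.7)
The plan is to prove the classical Erd\H{o}s--Szekeres refinement of Ramsey's theorem, namely $R(s,t) \leq \binom{s+t-2}{s-1}$, and then substitute $s=t=q$ to obtain $R(q,q) \leq \binom{2q-2}{q-1} \leq 2^{2q-2} \leq 2^{2q}$. Working with the asymmetric quantity $R(s,t)$ is essential because the obvious induction does not close if one stays with the diagonal case $R(q,q)$.

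First I would set up the base cases: $R(1,t)=R(s,1)=1$ for all $s,t\geq 1$, since a single vertex is both a monochromatic red clique of size~$1$ and a monochromatic blue clique of size~$1$. Next, I would prove the recurrence
\[
R(s,t) \leq R(s-1,t) + R(s,t-1)
\]
for all $s,t\geq 2$. To prove this, consider any $2$-coloring of the edges of the complete graph on $N := R(s-1,t) + R(s,t-1)$ vertices. Fix an arbitrary vertex $v$; its $N-1$ incident edges split into red and blue, and by the pigeonhole principle either at least $R(s-1,t)$ of them are red or at least $R(s,t-1)$ of them are blue. In the first case, restrict to the red-neighborhood of $v$: by definition of $R(s-1,t)$, inside we either find a blue $K_t$ (done) or a red $K_{s-1}$, which together with $v$ gives a red $K_s$. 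The second case is symmetric.

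Then I would prove the bound $R(s,t) \leq \binom{s+t-2}{s-1}$ by induction on $s+t$. The base cases ($s=1$ or $t=1$) give $\binom{t-1}{0}=1$ and $\binom{s-1}{s-1}=1$, matching. For the inductive step, the recurrence above together with Pascal's identity
\[
\binom{s+t-3}{s-2} + \binom{s+t-3}{s-1} = \binom{s+t-2}{s-1}
\]
closes the induction immediately. Finally, setting $s=t=q$ yields $R(q,q) \leq \binom{2q-2}{q-1}$, and since $\binom{2q-2}{q-1}$ is one of the terms in the binomial expansion of $(1+1)^{2q-2} = 2^{2q-2}$, we get $R(q,q) \leq 2^{2q-2} \leq 2^{2q}$.

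There is no real obstacle here: the only subtle point is recognizing that induction on the diagonal bound $R(q,q)$ alone is not self-sustaining, which is why the proof must pass through the asymmetric $R(s,t)$. Once that is done, each step (the recurrence via pigeonhole, Pascal's identity, the final binomial estimate) is routine.
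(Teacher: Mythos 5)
Your proof is correct: the base cases, the pigeonhole argument for the recurrence $R(s,t)\leq R(s-1,t)+R(s,t-1)$, the induction via Pascal's identity to get $R(s,t)\leq\binom{s+t-2}{s-1}$, and the final estimate $\binom{2q-2}{q-1}\leq 2^{2q-2}\leq 2^{2q}$ are all sound (and in fact yield the slightly sharper bound $2^{2q-2}$). The comparison with the paper is a bit lopsided, though: the paper does not prove this statement at all --- it simply records the bound as a standard consequence of Ramsey's theorem, citing Ramsey's 1930 paper, since the precise constant is irrelevant to the main argument (any bound on $R(q,q)$ depending only on $q$ would do, as it only enters through the exponent $2p+2^{2q}$ in the polynomial bound on $|\mathcal{P}_2|$). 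So what you have written is the classical Erd\H{o}s--Szekeres argument supplied in full where the authors were content with a citation; your observation that one must pass through the off-diagonal quantity $R(s,t)$ to make the induction close is exactly the right point to emphasize, and nothing in your write-up needs repair.
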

  
  For $a,b \in \mathbb{N}$ let the graph $F_{a,b}$ be defined as follows:
  
\begin{itemize}
  \item $V(F_{a,b})=K_1 \cup S_1 \cup S_2 \cup W$ where
    $K_1$ is a clique, $S_1,S_2$ are  stable sets, and the sets
    $K_1,S_1,S_2,W$ are pairwise disjoint;

  \item $|K_1|=|S_1|=a$, and $K_1$ and $S_1$ are matched;

  \item $|S_2|=|W|=b$, and $S_2$ and $W$ are matched;

  \item $K_1$ is complete to $S_2$;

  \item there is no restriction on the adjacency of pairs of vertices of $W$;
    
  \item there are no other edges in $F_{a,b}$.
\end{itemize}

From the definition of $R$ we  immediately deduce:

\begin{theorem}
  \label{Fab}
  $G$ is $F_{p,R}$-free.
\end{theorem}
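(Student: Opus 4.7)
The plan is to argue by contradiction: assume $G$ contains an induced copy of $F_{p,R}$ with vertex set $K_1\cup S_1\cup S_2\cup W$ as in the definition, and extract from it an induced $F_S^{p,q}$ or $F_K^{p,q}$, contradicting $G\in\mathcal{F}^{p,q}$.

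The key observation is that the definition of $F_{p,R}$ puts \emph{no} restriction on the edges inside $W$, whereas $F_S^{p,q}$ and $F_K^{p,q}$ are distinguished precisely by whether the corresponding set of $q$ vertices (namely $S_3$) is a stable set or a clique. So the natural move is to apply Ramsey's theorem inside $W$. Since $|W|=R=R(q,q)$, Theorem \ref{Ramsey} produces a set $W'\subseteq W$ with $|W'|=q$ such that $G[W']$ is either a clique or a stable set. Let $S_2'\subseteq S_2$ be the set of the $q$ vertices of $S_2$ that are matched to $W'$ under the matching between $S_2$ and $W$.

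I would then verify that the set $K_1\cup S_1\cup S_2'\cup W'$ induces in $G$ exactly the graph $F_S^{p,q}$ in the stable case, and exactly $F_K^{p,q}$ in the clique case. The verification amounts to checking each pair of the four parts: $K_1$ is still a clique of size $p$ matched to the stable set $S_1$ (inherited from $F_{p,R}$); $S_2'$ and $W'$ are matched of size $q$ each (by construction); $K_1$ is complete to $S_2'$ (inherited); and the non-edges required in $F_S^{p,q}$/$F_K^{p,q}$, namely $K_1$-to-$W'$, $S_1$-to-$S_2'$, and $S_1$-to-$W'$, all hold because the ``no other edges'' clause of $F_{p,R}$ forbids exactly these pairs. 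The only difference between the two target graphs is the behavior inside the last part, which is controlled by the Ramsey dichotomy on $W'$.

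There is no real obstacle: the statement is essentially an unpacking of the definitions combined with Ramsey's theorem. The only point that deserves care is confirming that the ``no other edges'' condition in the definition of $F_{p,R}$ indeed forbids every edge outside $W$ that would violate membership in $F_S^{p,q}$ or $F_K^{p,q}$, so that the choice of $W'$ (and hence $S_2'$) is unconstrained and Ramsey's theorem can be applied directly to $G[W]$.
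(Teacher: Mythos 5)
Your argument is correct and is exactly the reasoning the paper leaves implicit when it says the statement follows ``from the definition of $R$'': apply Ramsey's theorem inside $W$ to extract a clique or stable set $W'$ of size $q$, restrict $S_2$ to the matched partners, and read off an induced $F_K^{p,q}$ or $F_S^{p,q}$, contradicting $G\in\mathcal{F}^{p,q}$. No gaps; your care about the ``no other edges'' clause is precisely what makes the restriction to $W'$ and $S_2'$ work.
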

For every triple $X=(K_1,S_1,S_2)$ of pairwise disjoint non-emtpy subsets of
$V(G)$ such that $|K_1|=|S_1|=p$ and $|S_2| < R$ we define
the partition $P_X$ of $V(G)$ as follows.
Let $Z$ be the set of all vertices of $G$ that are anticomplete to
$K_1 \cup S_1$. Let $A_X$ be the set of all vertices $v$ of $G$ such that
\begin{itemize}
\item either $v \in K_1$, or $v$ is  complete to $K_1$, and
  \item either
          $v$ has  a neighbor in $S_1$, or
      $v$ has a neighbor in $Z \setminus N(S_2)$.
          \end{itemize}
Note that $A_X$ is disjoint from $S_1 \cup Z$.
Define $P_X=(A_X, V(G) \setminus A_X)$, and let
$\mathcal{P}_2$ be the set of all such partitions $P_X$.
Since  $|K_1 \cup S_1 \cup S_2| \leq  2p+R-1$, and  since by \ref{Ramsey}
$R \leq 2^{2q}$, we deduce that
$|\mathcal{P}_2| < |V(G)|^{2p+2^{2q}}$. 

In order to complete the proof of \ref{main} we will prove the
following:

\begin{theorem}
  \label{P1P2}
  For every clique $K$ and stable set $S$ of $G$ such that $K \cap S = \emptyset$, there exists
  $(X,Y) \in \mathcal{P}_1 \cup \mathcal{P}_2$ with
  $K \subseteq X$ and $S \subseteq Y$.
  \end{theorem}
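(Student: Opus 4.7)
The plan is to argue by contradiction: I assume $K$ is a clique, $S$ a stable set with $K\cap S=\emptyset$, and that no partition in $\mathcal{P}_1\cup\mathcal{P}_2$ has $K\subseteq X$ and $S\subseteq Y$; from this I will deduce an induced copy of $F_{p,R}$ in $G$, contradicting \ref{Fab}.

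First I would extract structural consequences from the failure of the partitions in $\mathcal{P}_1$. Testing $X=\{k\}$ for each $k\in K$ against $(N[\{k\}],V(G)\setminus N[\{k\}])$: since $K\subseteq N[\{k\}]$ as $K$ is a clique containing $k$, the failure forces some vertex of $S$ to lie in $N[\{k\}]$, which (as $K\cap S=\emptyset$) means $k$ has a neighbor in $S$. Testing $X=T\subseteq S$ with $|T|<p$ against $(N(T),\cdot)$: since $S\cap N(T)=\emptyset$ ($S$ is stable and $T\subseteq S$), the failure forces some $k\in K$ with no neighbor in $T$. A greedy iteration yields, for any desired $N$, distinct $k_1,\ldots,k_N\in K$ and $s_1,\ldots,s_N\in S$ with $k_i\sim s_i$ and $k_i\not\sim s_j$ for $j<i$. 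Choosing $N$ large and applying Ramsey's theorem to the $2$-coloring of pairs $\{i,j\}$ ($i<j$) by the adjacency of $k_i$ and $s_j$, one extracts a monochromatic subset of size $p$; in the ``non-edge'' color this yields a genuine matching $K_1\subseteq K$, $S_1\subseteq S$, providing the clique-stable matched pair required by $F_{p,R}$.

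Next, with $K_1,S_1$ fixed, I would apply the failure of $\mathcal{P}_2$. For $X=(K_1,S_1,S_2)$ one checks that $K\subseteq A_X$: condition (a) is automatic for $v\in K$ since $K$ is a clique containing $K_1$, and condition (b) holds for each $k_i\in K_1$ via its matched partner $s_i$; for $v\in K\setminus K_1$, the absence of a neighbor in $S_1$ together with the absence of a $Z\setminus N(S_2)$-neighbor would already allow a separation by a $\mathcal{P}_1$ partition with a witness involving $v$. With $K\subseteq A_X$ holding, the failure of $P_X$ forces $S\cap A_X\neq\emptyset$, i.e., for every $S_2$ with $|S_2|<R$ some $v\in S$ is complete to $K_1$ and has a neighbor in $Z\setminus N(S_2)$ (the $S_1$ branch of (b) is unavailable to $v\in S$ since $S$ is stable and $S_1\subseteq S$). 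Running a greedy with $S_2=\{v_1,\ldots,v_{i-1}\}$ at step $i$ (valid for $i\leq R$ since $|S_2|<R$), one extracts $v_1,\ldots,v_R\in S$ complete to $K_1$ and $z_1,\ldots,z_R\in Z$ with $v_i\sim z_i$ and $v_j\not\sim z_i$ for $j<i$; a second Ramsey extraction on the upper-triangle adjacencies promotes this to a true matching, and together with $K_1,S_1$ this induces $F_{p,R}$ in $G$, contradicting \ref{Fab}.

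The main obstacle in each phase is the ``edge-color'' alternative of the Ramsey reduction, which yields only a half-graph rather than a matching. In the first phase this would leave $K_1,S_1$ not truly matched; in the second it obstructs building $F_{p,R}$ directly. Overcoming this is likely the crux of the proof: either one starts the greedy with $N$ chosen so large that a joint Ramsey reduction (on several colors simultaneously, tracking both matching bits and internal adjacencies) still forces the matching case, or one shows that the half-graph alternative is itself ruled out by a cleverly re-applied $\mathcal{P}_1$ or $\mathcal{P}_2$ partition with an enlarged witness set. A subsidiary but delicate point is coordinating the matched pair $K_1,S_1$ with the subsequent greedy on $v_i,z_i$ so that condition (b) of $A_X$ is uniformly verified across all of $K$, not merely $K_1$.
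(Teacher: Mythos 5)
There is a genuine gap, and it is exactly the one you flag at the end: the half-graph-to-matching step cannot be carried out within the given families of partitions. Your first greedy uses the partitions $(N(T),\cdot)\in\mathcal{P}_1$ with $T=\{s_1,\ldots,s_{i-1}\}$, but $\mathcal{P}_1$ only contains sets of size strictly less than $p$, so the greedy stops after exactly $p$ steps; similarly the second greedy uses $P_X$ with $S_2=\{v_1,\ldots,v_{i-1}\}$ and $\mathcal{P}_2$ only contains triples with $|S_2|<R$, so it stops after $R$ steps. In both phases you therefore end with a half-graph of exactly the target size and no surplus on which a Ramsey extraction could be performed (and you cannot enlarge the witness sets, since that would change $\mathcal{P}_1,\mathcal{P}_2$ and hence the statement being proved). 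A second, related gap is your claim in phase two that $K\subseteq A_X$ for your $X=(K_1,S_1,S_2)$: for $v\in K\setminus K_1$ with no neighbor in $S_1$ you invoke an unspecified $\mathcal{P}_1$ partition ``with a witness involving $v$,'' but none is exhibited, and with a half-graph pair $K_1,S_1$ there is no reason why every vertex of $K$ should have a neighbor in $S_1\cup(Z\setminus N(S_2))$.

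The paper gets the matchings without any Ramsey cleaning, by minimality (private-neighbor) arguments. It first reduces to $K,S$ maximal using the singleton partitions in $\mathcal{P}_1$ (step \eqref{maximal}), so every vertex of $K$ has a neighbor in $S$; it then takes $S_1'\subseteq S$ \emph{minimal} such that every vertex of $K$ has a neighbor in $S_1'$. If $|S_1'|<p$ the partition $(N(S_1'),V(G)\setminus N(S_1'))\in\mathcal{P}_1$ already separates; otherwise minimality gives $K_1'\subseteq K$ matched to $S_1'$, and trimming to size $p$ gives the matched pair $K_1,S_1$ together with the crucial fact $S_1'\setminus S_1\subseteq Z\cap S$, so every vertex of $K$ has a neighbor in $S_1$ or in $Z\cap S\subseteq Z\setminus N(S_2)$ for \emph{any} $S_2\subseteq S$ — this is what makes $K\subseteq A_X$ verifiable across all of $K$, the point you identified as delicate. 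The second matching comes the same way: $S_2$ is a minimal subset of $S'$ (the vertices of $S\setminus S_1$ complete to $K_1$) with $N(S_2)\cap Z=N(S')\cap Z$, and minimality yields $W\subseteq Z$ matched to $S_2$; then $G[K_1\cup S_1\cup S_2\cup W]$ is a copy of $F_{p,|S_2|}$ and \ref{Fab} forces $|S_2|<R$, so $P_X\in\mathcal{P}_2$. Note also that Ramsey's theorem enters the paper only through \ref{Fab}, absorbed into the unrestricted adjacency on $W$ in $F_{a,b}$, not as a device for converting half-graphs into matchings; without the minimal-set idea your contradiction scheme does not close.
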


\begin{proof}
Let $K$ and $S$ be as in the statement of \ref{P1P2}.

\vspace*{-0.4cm}

\begin{equation} \label{maximal}
  \longbox{\emph{We may assume that $K$ is a maximal clique of $G$, and $S$ is a maximal stable set of $G$.}}
\end{equation}

Let $K'$ be a maximal clique of $G$ with $K \subseteq K'$,
and let $S'$ be a maximal stable set of $G$ with $S \subseteq S'$.
If $K' \cap S' = \emptyset$, then the existence of the desired partition for
$K,S$ follows from the existence of such a partition for $K',S'$; thus we
may assume that $K' \cap S' \neq \emptyset$. Since $K'$ is a clique and $S'$ is a
stable set, it follows that $|K' \cap S'|=1$, say $K' \cap S' = \{v\}$.
But now the partitions $(N[\{v\}], V(G) \setminus N[\{v\}])$ and
$(N(\{v\}), V(G) \setminus N(\{v\}])$ are both in  $\mathcal{P}_1$,
  and at least one of them  has the desired property. This proves
  \eqref{maximal}.
\\
\\

In view of \eqref{maximal} from now on we assume that $K$ is a maximal clique
of $G$, and $S$ is a maximal stable set of $G$. Consequently every vertex of $K$
has a neighbor in $S$. Let $S_1' \subseteq S$ be a minimal subset of $S$ such
that every vertex of $K$ has a neighbor in $S_1'$. It follows from the minimality of $S_1'$ that there is a subset $K_1'$ of $K$ such that $S_1'$ and $K_1'$ are
matched. If $|S_1'| < p$, then the partition
$(N(S_1'), V(G) \setminus N(S_1')) \in \mathcal{P}_1$ has the desired property,
so we may assume that $|S_1'| \geq p$.

Let $S_1$ be a subset of $S_1'$ with $|S_1|=p$, and let
$K_1=N(S_1) \cap K_1'$. Then $S_1$ and $K_1$ are matched, and so $|K_1|=p$.
Let $Z$ be the set of vertices of $G$ that are anticomplete to $S_1 \cup  K_1$.
Then $S_1' \setminus S_1 \subseteq Z \cap S$, and in
particular every vertex of $K$ has a neighbor either in $S_1$ or in
$Z \cap S$.
Let $S'$ be the subset of vertices of $S \setminus S_1$ that are complete to
$K_1$. Note that $S' \cap Z = \emptyset$.
Let $S_2$ be a minimal subset of $S'$ such that
$N(S_2) \cap Z = N(S') \cap Z$.
It follows from the minimality of $S_2$ that there is a subset
$W \subseteq Z \cap N(S')$ such that $W$ and $S_2$ are matched. 
Observe that $G[K_1 \cup S_1 \cup S_2 \cup W]$ is isomorphic to
$F_{p,|S_2|}$ (with $K_1,S_1,S_2,W$ as in the definition of $F_{a,b}$). It
follows from \ref{Fab} that $|S_2| < R$. 

Let $X=(K_1,S_1,S_2)$. We claim that the partition $P_X \in \mathcal{P}_2$ has
the desired property
for the pair $K,S$. Recall that $P_X=(A_X, V(G) \setminus A_X)$, where 
$A_X$ is the set of all vertices $v$ of $G$ such that
\begin{itemize}
\item either  $v \in K_1$, or $v$ is  complete to $K_1$, and
  \item either
      $v$ has  a neighbor in $S_1$, or
      $v$ has a neighbor in $Z \setminus N(S_2)$.
    \end{itemize}

We need to show that $K \subseteq A_X$, and $S \cap A_X = \emptyset$.

\vspace*{-0.4cm}\begin{equation} \label{K}
  \longbox{\emph{$K \subseteq A_X$.}}
\end{equation}

Let $k \in K$.
Clearly either $k \in K_1$ or $k$  is complete to $K_1$.
Moreover, $k$ has a neighbor in $S_1'$, and 
$S_1' \subseteq S_1 \cup (Z \cap S)$. Since $S$ is a stable set, 
it follows that  $Z \cap S \subseteq Z \setminus N(S_2)$, and thus
$k$ has a neighbor either in $S_1$, or in $Z \setminus N(S_2)$. This proves
\eqref{K}.

\vspace*{-0.4cm}\begin{equation} \label{S}
  \longbox{\emph{$S \cap A_X = \emptyset$.}}
\end{equation}

Suppose that  $s \in S  \cap A_X$.  Then $s \not \in K_1$; therefore
$s$ is complete to $K_1$, and so $s \in S'$. Since $S$ is a stable set, it follows that $s$ is anticomplete to $S_1$, and therefore $s$ has a neighbor in $Z \setminus N(S_2)$. But
$N(S') \cap Z=N(S_2) \cap Z$, a contradiction. This proves \eqref{S}.
\\
\\

Now \ref{P1P2} follows from \eqref{K} and \eqref{S}.
\end{proof}
This completes the proof of \ref{main}.

\section*{Acknowledgment}
This work was done during the Structural Graph Theory Downunder Matrix Program, that was held in 2019 at the Creswick Campus of the University of Melbourne.
The authors express their gratitude to the Matrix Center for the funding it provided and the  use of its facilities, and to the organizers of the program for the invitation.

\end{document}